\documentclass[11pt,reqno]{amsart}
\usepackage{amsfonts,amssymb,amsmath,amsthm}
\usepackage[colorlinks,citecolor=blue,urlcolor=black,linkcolor=black]{hyperref}

\newtheorem{thm}{Theorem}[section]
\newtheorem{mainthm}{Theorem}

\newtheorem{lemma}[thm]{Lemma}
\newtheorem{claim}{Claim}[thm]

\theoremstyle{definition}
\newtheorem{defn}[thm]{Definition}
\newtheorem{q}{Question}
\newtheorem{notation}[thm]{Notation}

\theoremstyle{remark}
\newtheorem{remark}[thm]{Remark}

\newcommand\symdiff{\mathbin{\bigtriangleup}}
\renewcommand{\mid}{\mathrel{|}\allowbreak}

\DeclareMathOperator{\crit}{crit}
\DeclareMathOperator{\cf}{cf}

\newcommand\s{\subseteq}
\newcommand*\axiomfont[1]{\textsf{\textup{#1}}}
\newcommand\zfc{\axiomfont{ZFC}}
\newcommand\gch{\axiomfont{GCH}}

\begin{document}
\title[GCH above a strongly compact cardinal]{The generalized continuum hypothesis above a strongly compact cardinal}
\author{Zhixing You}
\address{Einstein Institute of Mathematics, The Hebrew University of Jerusalem, Israel}
\email{zhixingy121@gmail.com}

\date{Preprint as of October 5, 2026.}

\begin{abstract}
    We answer a long-standing open question of Woodin
    by proving that if $\kappa$ is strongly compact and $\gch$ holds below $\kappa$, then $\gch$ holds.
\end{abstract}
\subjclass[2020]{03E55, 03E10}
\keywords{Strongly compact cardinal, the generalized continuum hypothesis}
\maketitle

\section{Introduction}
Cardinal arithmetic studies the possible values of cardinal exponentiation.
Of particular interest is the continuum function $\lambda\mapsto 2^\lambda$.
The \emph{Generalized Continuum Hypothesis} ($\gch$) asserts that this function
takes its least possible value at every infinite cardinal:
$2^\lambda=\lambda^+$.

Easton's theorem \cite[Theorem~10.7]{Kanamori} shows that $\zfc$ imposes essentially no constraint
on the continuum function $\lambda\mapsto 2^\lambda$ at regular cardinals
beyond monotonicity and K\"onig's inequality $\cf(2^\lambda)>\lambda$.

Large cardinal axioms can impose additional restrictions on the
continuum function. Strong compactness provides an important example.
Let $\kappa$ be a strongly compact cardinal.
Solovay's classical theorem \cite[p.~308]{Kanamori} establishes
$\gch$ at every singular strong limit cardinal above $\kappa$.

For regular cardinals, since every strongly compact cardinal is measurable, Scott's theorem
shows that $\gch$ below $\kappa$ already implies
$2^\kappa=\kappa^+$ \cite[Corollary~5.8]{Kanamori}.
Under the stronger assumption that $\kappa$ is supercompact,
$\gch$ below $\kappa$ implies $\gch$
\cite[Proposition~22.2]{Kanamori}.
Woodin~\cite{Apter2000} asked whether strong compactness alone
suffices for this conclusion.
\begin{q}[Woodin, {\cite{Apter2000}}]\label{qwoodin}
    Suppose that $\kappa$ is strongly compact and $\gch$ holds
    below $\kappa$. Must $\gch$ hold?
\end{q}

In this paper we answer the question in the affirmative.

\begin{mainthm}\label{thma}
    If $\kappa$ is a strongly compact cardinal and $\gch$ holds below $\kappa$,
    then $\gch$ holds.
\end{mainthm}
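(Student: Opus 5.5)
Since $\gch$ below $\kappa$ is assumed and Scott's theorem \cite[Corollary~5.8]{Kanamori} already gives $2^\kappa=\kappa^+$, the plan is to prove $2^\lambda=\lambda^+$ by induction on cardinals $\lambda>\kappa$. Singular $\lambda$ are the easy case. If $\gch$ holds below a singular $\lambda$, then $\lambda$ is a strong limit, and Solovay's theorem \cite[p.~308]{Kanamori} gives $\lambda^{<\kappa}=\lambda$, hence $2^\lambda=\lambda^{\cf\lambda}=\lambda^+$. (Alternatively one can use Silver-type arguments, or directly the fact that $\lambda^{\cf \lambda}=\lambda^+$ at strong limits above a strongly compact.) Successor cardinals $\lambda=\mu^+$ with $\mu$ singular reduce to $\mu^{\lambda}$, and the same Solovay input handles them. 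The real content is therefore a regular $\lambda>\kappa$ with $\gch$ below $\lambda$, where we must show $2^\lambda=\lambda^+$.

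For regular $\lambda$, fix a fine $\kappa$-complete ultrafilter $U$ on $P_\kappa\lambda$ and let $j:V\to M$ be its ultrapower embedding. We have $\crit j=\kappa$, $j(\kappa)>\lambda$, $M$ is closed under $\kappa$-sequences, and there is $D\in M$ with $j``\lambda\subseteq D$ and $|D|^M<j(\kappa)$. By elementarity, $M\models$ ``$\gch$ holds below $j(\kappa)$''; in particular $M\models 2^{\lambda}=\lambda^+$. The idea is to pull $\gch$ at $\lambda$ back from $M$ to $V$. Each $A\subseteq\lambda$ is coded in $M$ by $j(A)\cap D$, and $A$ is recovered from it since $j``\lambda\subseteq D$: $\alpha\in A$ iff $j(\alpha)\in j(A)\cap D$. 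Take $D\subseteq j(\lambda)$ of $M$-size $<j(\kappa)$; one may even arrange $D$ to have $M$-cardinality $\lambda$ if $U$ is chosen suitably, using $\gch$ at small cardinals to control $|D|$. Then $A\mapsto j(A)\cap D$ is an injection from $P(\lambda)$ into $P^M(D)$, so we obtain $2^\lambda\le |P^M(D)|^V$. Computing in $M$, $|P^M(D)|^M=(|D|^+)^M$ by $\gch$ in $M$ below $j(\kappa)$. It then remains to bound the true cardinality $|(\nu^+)^M|^V$ for $\nu=|D|^M$, using $|j(\kappa)|^V\le |\kappa^{P_\kappa\lambda}|$, which by induction is $\le (\lambda^{<\kappa})^+$. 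This bound is circular, so it must be refined.

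The main obstacle is exactly this last step: the ultrapower is too large, and the naive count gives $2^\lambda\le 2^{\lambda^{<\kappa}}$, which is circular. To avoid this I would instead use a $\lambda$-sized substructure. Take $X\prec H_\theta$ with $|X|=\lambda$, $\lambda+1\subseteq X$, and build $U$ so that the relevant $D$ lies in $j(X)$, working with the restricted map $j\restriction X$. Alternatively, I would imitate the supercompact proof by showing that every $A\subseteq\lambda$ lies in an inner model $N_A$ with $P(\lambda)\cap N_A$ of size $\lambda^+$, so that $A$ is determined by an ordinal $<\lambda^+$ together with a parameter from a set of size $\lambda$, using strong compactness to cover. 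Concretely, I would try to show $P(\lambda)\subseteq\bigcup_{\xi<\lambda^+}\mathcal{A}_\xi$ with $|\mathcal{A}_\xi|\le\lambda$, via a Solovay-style covering: every subset of $\lambda^+$ of size $<\lambda$ is covered by a set of size $<\lambda$ from a family of size $\lambda^+$, so that $\lambda^{<\lambda}=\lambda$. Then $2^\lambda$ reduces to counting codes of "approximations". I expect this counting step to be the heart of the paper.
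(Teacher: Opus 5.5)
Your proposal has a genuine gap: the step you call ``the heart of the paper'' is never supplied. None of the alternatives you sketch (elementary substructures, inner models $N_A$, a Solovay-style covering giving $\lambda^{<\lambda}=\lambda$) yields $2^\lambda\le\lambda^+$. In fact $\lambda^{<\lambda}=\lambda$ already follows from $\gch$ below a regular $\lambda$ and says nothing about $2^\lambda$.

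You do reach the right starting point: the injection $A\mapsto j(A)\cap D$ into $\mathcal P(D)^M$, together with $M\models|\mathcal P(D)|=d^+$ for $d=|D|^M<j(\kappa)$. But you then try to bound the $V$-cardinality of $(d^+)^M$, which is hopeless. The missing idea is to show that $F=\{j(A)\cap D\mid A\subseteq\lambda\}$ is \emph{thin}: it meets every $T\in M$ with $|T|^M\le d$ in at most $2^{<\lambda}=\lambda$ points. Fix in $M$ a bijection $h:\theta\to\mathcal P(D)^M$ with $\theta=(d^+)^M$. Each $h[\beta]$ with $\beta<\theta$ is in $M$ with $M$-size $\le d$. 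So every proper initial segment of $h^{-1}[F]\subseteq\theta$ has size $\le\lambda$, hence $|F|\le\lambda^+$, however large $|\theta|$ is in $V$.

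Thinness rests on two ingredients you did not consider.
\begin{enumerate}
\item Choose $D$ of \emph{minimal} $M$-cardinality. Then any $S\in M$ with $|S|^M<d$ contains fewer than $\lambda$ points of $j[\lambda]$. Otherwise, for a bijection $e$ from $\lambda$ onto $\{\alpha<\lambda\mid j(\alpha)\in S\}$, the set $j(e)^{-1}[S]\in M$ would cover $j[\lambda]$ with $M$-size $<d$.
\item For $A\neq B$ one has $\min\bigl((j(A)\cap D)\symdiff(j(B)\cap D)\bigr)=j(\min(A\symdiff B))\in j[\lambda]$.
\end{enumerate}
Now take $|T|^M<d$, and let $S\in M$ be the set of first-difference points of pairs from $T$. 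The members of $F\cap T$ are separated by their traces on $E=S\cap j[\lambda]$, and $|E|<\lambda$, so $|F\cap T|\le 2^{|E|}\le2^{<\lambda}$. The case $|T|^M=d$ reduces to this, because minimality also gives $d$ a cofinal subset of size $\le\lambda$ (namely $f[j[\lambda]]$ for a bijection $f:D\to d$ in $M$).

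Your case analysis also contains errors, though the paper makes it unnecessary: its argument treats every $\lambda\ge\kappa$ uniformly, using only $2^{<\lambda}=\lambda$.
\begin{itemize}
\item Solovay's theorem does not give $\lambda^{<\kappa}=\lambda$ for singular $\lambda$ of cofinality $<\kappa$; K\"onig's theorem forbids it. What it gives directly is $\gch$ at singular strong limits above $\kappa$, which suffices there.
\item Successors of singulars are not an easy case. Since $\mu^{\mu^+}=2^{\mu^+}$, ``reducing to $\mu^\lambda$'' is circular, and Solovay's theorem says nothing about $2^{\mu^+}$.
\item The claim that $U$ can be chosen with $|D|^M=\lambda$ is unjustified. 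If it were available the problem would be trivial, since every ordinal below $(\lambda^+)^M$ would then have $V$-size $\le\lambda$. Nothing in the hypotheses provides it.
\item The bound ``$|j(\kappa)|\le(\lambda^{<\kappa})^+$ by induction'' is not what induction gives: $|\kappa^{P_\kappa\lambda}|=2^{\lambda^{<\kappa}}=2^\lambda$, which is exactly the unknown quantity.
\end{itemize}
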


\section{Preliminaries}\label{sec:prelim}

Our notation and terminology are standard and follow \cite{Kanamori}.
We work in a transitive model $V$ of $\zfc$.
For an infinite cardinal $\lambda$, let
$2^{<\lambda}:=\sup\{2^\mu\mid \mu<\lambda\text{ is a cardinal}\}$.
We say that \emph{$\gch$ holds below $\kappa$} iff $2^\mu=\mu^+$ for every infinite cardinal $\mu<\kappa$.
For a set $X$, let $|X|$ denote the cardinality of $X$.
For sets $X$ and $Y$, $X\symdiff Y$ denotes their symmetric difference.
For an elementary embedding $j:V\rightarrow M$,
let $\crit(j)$ denote the critical point of $j$, i.e., the least ordinal moved by $j$.
We write $j[X]:=\{j(x)\mid x\in X\}$ for any set $X$.
Superscripts $M$ indicate computations in $M$; for $X\in M$, $|X|^M$ and $\mathcal P(X)^M$ denote the cardinality and power set of $X$ computed in $M$, respectively.
Unless indicated otherwise, cardinalities and power sets are computed in $V$.

\begin{defn}[Strong compactness]\label{dfine}
    Suppose that $\kappa$ is an uncountable cardinal.
    \begin{enumerate}
        \item For any $\lambda\ge\kappa$, $\kappa$ is \emph{$\lambda$-strongly compact} iff there exists a definable elementary embedding $j:V\rightarrow M$ into a transitive class $M$ with critical point $\kappa$,
              such that there exists a $D\in M$ with $j[\lambda]\subseteq D$ and $M\models |D|<j(\kappa)$.
              In this case, we say that $D$ is \emph{minimal} iff $|C|^M\ge|D|^M$ for every $C\in M$ with $j[\lambda]\subseteq C$.
        \item We say that $\kappa$ is \emph{strongly compact} iff it is $\lambda$-strongly compact for every $\lambda\ge\kappa$.
    \end{enumerate}
\end{defn}

\begin{notation}
    In the following, without loss of generality,
    we may further assume that $D\subseteq \sup(j[\lambda])$ by taking $D \cap \sup(j[\lambda])$ if necessary.
\end{notation}

The next lemma is standard, but we include it for completeness.
\begin{lemma}\label{lpullback}
    Suppose $\kappa\leq \lambda$ are two uncountable cardinals
    and $\kappa$ is $\lambda$-strongly compact,
    as witnessed by a definable elementary embedding $j:V\rightarrow M$ and a minimal $D$. Let $d:=|D|^M$. Then:
    \begin{enumerate}
        \item[(1)] If $S\in M$ and $|S|^M< d$, then $|\{\alpha<\lambda\mid j(\alpha)\in S\}|<\lambda$.
        \item[(2)] There is a set $R\subseteq d$ cofinal in $d$ with $|R|\le\lambda$.
    \end{enumerate}
\end{lemma}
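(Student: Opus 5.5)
Both parts will be proved by contradiction with the minimality of $D$: in each case, failure of the conclusion will produce a set $C\in M$ with $j[\lambda]\subseteq C$ and $|C|^M<d$. The tool in both parts is to pull $S$, or an initial segment of an enumeration of $D$, back through $M$-definable maps. These maps are either images of maps in $V$ under $j$, or built inside $M$.

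For (1), suppose towards a contradiction that $A:=\{\alpha<\lambda\mid j(\alpha)\in S\}$ has size $\lambda$. The steps are as follows.
\begin{enumerate}
\item Fix in $V$ an injection $e:\lambda\to A$.
\item By elementarity, $j(e):j(\lambda)\to j(A)$ is an injection in $M$, and $j(e)(j(\alpha))=j(e(\alpha))$ for every $\alpha<\lambda$.
\item Since $e(\alpha)\in A$, we have $j(e(\alpha))\in S$.
\item Put $C:=j(e)^{-1}[S]\in M$. By step 3, $j[\lambda]\subseteq C$.
\item Since $j(e)$ is injective in $M$, $|C|^M\le|S|^M<d$.
\end{enumerate}
This contradicts the minimality of $D$. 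The only point needing care is that $C$ really is an element of $M$. This holds because $j(e)\in M$ and $S\in M$.

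For (2), I work inside $M$ and fix a bijection $f:d\to D$ in $M$; this exists since $d=|D|^M$. For each $\alpha<\lambda$ we have $j(\alpha)\in D$, so $r(\alpha):=f^{-1}(j(\alpha))<d$ is defined. Let $R:=\{r(\alpha)\mid\alpha<\lambda\}$. This set is defined in $V$ (using $j$, which is definable), and clearly $|R|\le\lambda$.

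It remains to show that $R$ is cofinal in $d$, which is where minimality enters. Suppose instead that $R\subseteq\gamma$ for some $\gamma<d$. Then $j[\lambda]\subseteq f[\gamma]$, and $f[\gamma]\in M$. Since $d$ is a cardinal in $M$ and $\gamma<d$, we get $|f[\gamma]|^M=|\gamma|^M<d$. This again contradicts minimality.

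I expect no real obstacle. The only subtlety is bookkeeping: distinguishing sets that lie in $M$ (such as $f[\gamma]$ and $j(e)^{-1}[S]$) from sets defined only in $V$ (such as $R$ and $A$). Minimality may be applied only to the former.
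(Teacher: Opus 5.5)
Your proposal is correct and follows essentially the same argument as the paper: in (1) you pull $S$ back along $j(e)$ for an injection $e:\lambda\to A$, obtaining a set in $M$ that covers $j[\lambda]$ and has $M$-size $<d$; in (2) you take $R$ to be the image of $j[\lambda]$ under an $M$-bijection between $D$ and $d$, and minimality of $D$ forces $R$ to be unbounded. The only differences are cosmetic: you use an injection rather than a bijection in (1), and you orient $f$ as $d\to D$ rather than $D\to d$ in (2).
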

\begin{proof}
    (1) Let $E:=\{\alpha<\lambda\mid j(\alpha)\in S\}$.
    Suppose for contradiction that $|E|=\lambda$.
    Fix a bijection $e:\lambda\rightarrow E$.
    By elementarity, $j(e):j(\lambda)\rightarrow j(E)$ is a bijection in $M$.
    Let
    $$C:=\{\beta<j(\lambda)\mid j(e)(\beta)\in S\}.$$
    Then $C\in M$. Since $j(e)$ maps $C$ injectively into $S$,
    $|C|^M\le|S|^M<d$.

    For every $\alpha<\lambda$, we have $e(\alpha)\in E$, so
    $$j(e)(j(\alpha))=j(e(\alpha))\in S.$$
    Thus $j(\alpha)\in C$, and hence $j[\lambda]\subseteq C$.
    Note also that $C\in M$ and $|C|^M<d$.
    This contradicts the minimality of $D$.

    (2) Fix in $M$ a bijection $f:D\rightarrow d$, and put $R:=f[j[\lambda]]$.
    Then $R\subseteq d$ and $|R|=\lambda$.
    If $R$ were bounded in $d$, there would be some $\gamma<d$ with $R\subseteq\gamma$.
    Then $f^{-1}[\gamma]\in M$ would contain $j[\lambda]$ and have $M$-cardinality
    $|\gamma|^M<d$, contradicting the minimality of $D$.
    Hence $R$ is cofinal in $d$.
\end{proof}

\section{Proof of the main theorem}\label{sec:count}

The following lemma is the key to the proof of Theorem~\ref{thma}.
\begin{lemma}\label{lcapacity}
    Suppose $\kappa\le\lambda$ are two uncountable cardinals
    and $\kappa$ is $\lambda$-strongly compact,
    as witnessed by a definable elementary embedding $j:V\rightarrow M$ and a minimal $D$.
    Put $d:=|D|^M$ and $F:=\{j(A)\cap D\mid A\subseteq\lambda\}$.
    For every $T\in M$ with $|T|^M\le d$, we have
    \begin{equation}\label{eq1}
        |F \cap T|\le 2^{<\lambda}.
    \end{equation}
\end{lemma}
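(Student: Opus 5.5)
The plan is to cover $F\cap T$ by at most $\lambda$ pieces, each indexed by an initial segment of an $M$-enumeration of $T$, and to show each piece has size at most $2^{<\lambda}$. The small pieces will be handled by a separation argument that uses Lemma~\ref{lpullback}(1). Since $2^{<\lambda}\ge\lambda$, a union of $\lambda$ many pieces of size $\le 2^{<\lambda}$ still has size $\le 2^{<\lambda}$.

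\textbf{Step 1: reduction to $M$-small pieces.} In $M$, fix a surjection $g:d\to T$ (if $T=\emptyset$ there is nothing to prove). Let $R\subseteq d$ be the cofinal set with $|R|\le\lambda$ given by Lemma~\ref{lpullback}(2). For $\rho\in R$ put $T_\rho:=g[\rho]\in M$, so that $|T_\rho|^M\le|\rho|^M<d$. Every element of $F\cap T$ equals $g(\eta)$ for some $\eta<d$, and $\eta<\rho$ for some $\rho\in R$. Hence
\[
F\cap T=\bigcup_{\rho\in R}(F\cap T_\rho).
\]
It therefore suffices to show $|F\cap T_\rho|\le 2^{<\lambda}$ for each $\rho$.

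\textbf{Step 2: the separation set.} Fix $\rho$. Work in $M$ and let $S$ be the set of all ordinals $\min(s\symdiff t)$, where $s\ne t$ range over members of $T_\rho$ with $s\symdiff t\subseteq D$ nonempty. Then $S\in M$ and $|S|^M\le(|T_\rho|^M)^2<d$; the bound is strict when $|T_\rho|^M$ is infinite, and trivial when it is finite, since $d$ is infinite. Let $X:=\{\alpha<\lambda\mid j(\alpha)\in S\}$. By Lemma~\ref{lpullback}(1), $|X|<\lambda$.

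\textbf{Step 3: injectivity, the key step.} Note first that $A\mapsto j(A)\cap D$ is injective, because $\alpha\in A$ iff $j(\alpha)\in j(A)\cap D$. So $F\cap T_\rho$ is in bijection with $\mathcal A_\rho:=\{A\subseteq\lambda\mid j(A)\cap D\in T_\rho\}$. I claim that $A\mapsto A\cap X$ is injective on $\mathcal A_\rho$.

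Suppose $A\ne B$ lie in $\mathcal A_\rho$, let $Y:=A\symdiff B$ and $\alpha:=\min Y$. By elementarity, $j(\alpha)=\min j(Y)$, and $j(\alpha)\in j[\lambda]\subseteq D$. Therefore
\[
\min\bigl((j(A)\cap D)\symdiff(j(B)\cap D)\bigr)=\min\bigl(j(Y)\cap D\bigr)=j(\alpha).
\]
This ordinal belongs to $S$, so $\alpha\in X$, and consequently $A\cap X\ne B\cap X$. It follows that $|F\cap T_\rho|\le 2^{|X|}\le 2^{<\lambda}$, which together with Step~1 gives \eqref{eq1}.

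The main obstacle is exactly this step: $M$ cannot recognize $j[\lambda]$, so an arbitrary point of disagreement between two traces need not lie in $j[\lambda]$. The fix is to use the minimum of the symmetric difference, which elementarity forces to be of the form $j(\alpha)$.
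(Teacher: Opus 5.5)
Your proposal is correct and is essentially the paper's proof. It uses the same reduction to pieces of $M$-size $<d$ via the cofinal set $R$ from Lemma~\ref{lpullback}(2), the same set $S$ of first-difference points bounded by Lemma~\ref{lpullback}(1), and the same key observation that $\min\bigl((j(A)\cap D)\symdiff(j(B)\cap D)\bigr)=j\bigl(\min(A\symdiff B)\bigr)$; the remaining differences are cosmetic (a surjection $g:d\to T$ instead of an injection $T\to d$, and separating by the pullback $X\subseteq\lambda$ rather than by $E=j[\lambda]\cap S=j[X]$).
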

\begin{proof}
    we first reduce to $|T|^M<d$ and then find a set $E\subseteq j[\lambda]$ of size less than $\lambda$ that separates the members of $F\cap T$.

    By Lemma~\ref{lpullback}(2), fix a set $R\subseteq d$ cofinal in $d$ with $|R|\le\lambda$.
    Fix in $M$ an injection $q:T\rightarrow d$, and for every $r\in R$ let $T_r:=\{t\in T\mid q(t)\le r\}$.
    Then $T_r\in M$ and $|T_r|^M\le|r+1|^M<d$, since $d$ is an infinite cardinal of $M$.
    By cofinality of $R$, $T=\bigcup_{r\in R}T_r$.
    Thus, if the inequality~\eqref{eq1} holds for each $T_r$,
    it also holds for $T$,
    since $|R|\leq \lambda$ and
    $\lambda\cdot 2^{<\lambda}=2^{<\lambda}$.

    We may therefore assume that $|T|^M<d$.

    \begin{claim}\label{cldelta}
        Suppose that $A,B\s\lambda$ are distinct. Then
        $$\min\bigl((j(A)\cap D)\symdiff(j(B)\cap D)\bigr)=j\bigl(\min(A\symdiff B)\bigr).$$
    \end{claim}
    \begin{proof}
        Let $\delta:=\min(A\symdiff B)$.
        We need to prove that $\min\bigl((j(A)\cap D)\symdiff(j(B)\cap D)\bigr)=j(\delta)$.
        First, we have
        $$(j(A)\cap D)\symdiff(j(B)\cap D)=\bigl(j(A)\symdiff j(B)\bigr)\cap D.$$
        Second, by elementarity, $j(A)\symdiff j(B)=j(A\symdiff B)$, and its least element is $j(\delta)$.
        Since $j[\lambda]\s D$, it follows that $j(\delta)\in D$ and
        hence
        $$j(\delta)= \min\bigl((j(A)\symdiff j(B))\cap D\bigr)= \min\bigl((j(A)\cap D)\symdiff(j(B)\cap D)\bigr).$$
    \end{proof}

    Let
    $S:=\{\min(X\symdiff Y)\mid X,Y\in T\cap\mathcal P(D)^M\text{ and }X\neq Y\}$ and let $E:=j[\lambda]\cap S$.
    Since $D$ is a set of ordinals, the set $S$ is well-defined.
    Since $T\cap\mathcal P(D)^M\in M$, $S$ belongs to $M$.
    Moreover, $F\cap T\subseteq T\cap\mathcal P(D)^M$, and
    $|S|^M\le|T\times T|^M<d$.
    Since $j$ is injective and $j[\{\alpha<\lambda\mid j(\alpha)\in S\}]=E$, Lemma~\ref{lpullback}(1) gives
    $$|E|=|\{\alpha<\lambda\mid j(\alpha)\in S\}|<\lambda.$$

    Now define $\Phi:F\cap T\longrightarrow\mathcal P(E)$ by letting
    $\Phi(X):=X\cap E$ for every $X\in F\cap T$.
    We claim that $\Phi$ is injective.
    Take distinct $X,Y\in F\cap T$, and choose $A,B\subseteq\lambda$ such that
    $X=j(A)\cap D$ and $Y=j(B)\cap D$.
    Then $A\neq B$. Let $\delta:=\min(A\symdiff B)$.
    By Claim~\ref{cldelta},
    $$j(\delta)=\min(X\symdiff Y)\in S.$$
    Thus $j(\delta)\in j[\lambda]\cap S=E$, and $j(\delta)$ belongs to exactly one of $X$ and $Y$.
    Consequently $j(\delta)\in\Phi(X)\symdiff\Phi(Y)$, so $\Phi(X)\neq\Phi(Y)$.
    Therefore,
    $|F\cap T|\le|\mathcal P(E)|=2^{|E|}\le 2^{<\lambda}$.
\end{proof}

We now prove Theorem~\ref{thma}.

\begin{proof}[Proof of Theorem~\ref{thma}]
    We argue by induction on the infinite cardinals $\lambda \geq \kappa$.
    Assume that $\gch$ holds below $\lambda$.
    Then $2^{<\lambda}=\lambda$ and
    it suffices to prove that $2^\lambda\le (2^{<\lambda})^+$.
    Choose a definable elementary embedding $j:V\rightarrow M$ and a minimal $D\in M$ witnessing the $\lambda$-strong compactness of $\kappa$.
    Let $d:=|D|^M<j(\kappa)$.

    Since $\gch$ holds below $\kappa$, elementarity gives
    $M\models$ ``$\gch$ holds below $j(\kappa)$''.
    Since $d$ is an infinite cardinal of $M$ below $j(\kappa)$,
    we have $M\models|\mathcal P(D)|=d^+$.
    Let $\theta:=(d^+)^M$.
    Fix in $M$ a bijection $h:\theta\rightarrow\mathcal P(D)^M$, and put
    $$F:=\{j(A)\cap D\mid A\subseteq\lambda\}.$$
    Since $j[\lambda]\subseteq D$, for every $A\subseteq\lambda$ and $\alpha<\lambda$,
    $$\alpha\in A\quad\Longleftrightarrow\quad j(\alpha)\in j(A)\cap D.$$
    Thus $A\mapsto j(A)\cap D$ is injective, and $|F|=2^\lambda$.

    For every $\beta<\theta$, we have $h[\beta]\in M$ and $|h[\beta]|^M\le d$, because $\theta=(d^+)^M$.
    Now applied Lemma~\ref{lcapacity} to $T:=h[\beta]$, therefore gives
    $$|F\cap h[\beta]|\le 2^{<\lambda}.$$
    Hence $2^\lambda=|F|=|F\cap h[\theta]|\le (2^{<\lambda})^+$.
\end{proof}

\begin{remark}
    The same argument applies locally:
    if $\kappa\le \lambda$ are two uncountable cardinals and $\kappa$ is $\lambda$-strongly compact and $\gch$ holds below $\kappa$,
    then $2^\mu=\mu^+$ for every infinite cardinal $\mu\le\lambda$,
    since $\lambda$-strong compactness implies $\mu$-strong compactness whenever $\kappa\le\mu\le\lambda$.
\end{remark}

\begin{remark}
    The proofs above don't use $\crit(j)=\kappa$.
    If $\kappa$ is $\omega_1$-strongly compact (in the sense of Definition~\ref{dfine}, with
    $\crit(j)\ge\omega_1$ in place of $\crit(j)=\kappa$), the same proofs apply.
    Consequently, if $\kappa$ is $\omega_1$-strongly compact and $\gch$ holds below $\kappa$, then $\gch$ holds.
\end{remark}

\section*{Acknowledgements}

This work used AI assistance from GPT-6-Astra with the \texttt{ultra} reasoning setting,
with GPT-6.1-Sol and GPT-6-Astra invoked dynamically during the investigation.
The author takes
full responsibility for the paper.

The author was supported by the Israel Science Foundation, Grants no. 1967/21 and 3469/25.

\end{document}